\patchcmd\Gread@eps{\@inputcheck#1 }{\@inputcheck"#1"\relax}{}{}
\newtheorem{theorem}{Theorem}[section]
\newtheorem{corollary}[theorem]{Corollary}
\newcommand{\proof}{\noindent{\bf Proof.\ }}
\newcommand{\qed}{\hfill $\square$\medskip}
\newcommand{\gst}{\gamma_{\rm st}}
\begin{document}

\title{Strong domination number of a modified graph }

\author{ Saeid Alikhani$^{1,}$\footnote{Corresponding author}
	\and
Nima Ghanbari$^{2}$
}

\date{\today}

\maketitle

\begin{center}
	
$^1$Department of Mathematical Sciences, Yazd University, 89195-741, Yazd, Iran\\
$^2$Department of Informatics, University of Bergen, P.O. Box 7803, 5020 Bergen, Norway\\

\medskip
\medskip
{\tt  $^{1}$alikhani@yazd.ac.ir 
~~
$^{2}$Nima.Ghanbari@uib.no
 }

\end{center}

\begin{abstract}
 Let $G=(V,E)$ be a simple graph. A set $D\subseteq V$ is a strong dominating set of $G$, if for every vertex $x\in V\setminus D$ there is a vertex $y\in D$ with $xy\in E(G)$ and $\deg(x)\leq \deg(y)$. The strong domination number $\gst(G)$ is defined as the minimum cardinality of a strong dominating set.  In this paper, we study the effects on $\gst(G)$ when $G$ is modified by operations on vertex and edge of $G$. 
\end{abstract}

\noindent{\bf Keywords:}  Strong domination number, strong dominating set, vertex contraction.

\medskip
\noindent{\bf AMS Subj.\ Class.: } 05C15, 05C25.

\section{Introduction}

A dominating set of a graph $G=(V,E)$ is a subset $D$ of $V$ such that every vertex in $V\setminus D$ is adjacent to at least one member of $D$.  
The minimum cardinality of all dominating sets of $G$ is called  the  domination number of $G$ and is denoted by $\gamma(G)$. This parameter has  been extensively studied in the literature and there are  hundreds of papers concerned with domination.  
For a detailed treatment of domination theory, the reader is referred to \cite{domination}. Also, the concept of domination and related invariants have
been generalized in many ways.

A set $D\subseteq V(G)$ is a strong dominating set of $G$, if for every vertex $x\in V(G)\setminus D$ there is a vertex $y\in D$ with $xy\in E(G)$ and $deg(x)\leq deg(y)$. The strong
domination number $\gamma_{st}(G)$ is defined as the minimum cardinality of a strong dominating set. 
A strong dominating set with cardinality $\gamma_{st}(G)$ is called a $\gamma_{st}$-set.
The strong domination number was introduced in \cite{DM} and some upper bounds on this parameter presented in \cite{DM2,DM}. Similar to strong domination number, a set $D\subset V$  is a weak  dominating set of $G$, if every vertex $v\in V\setminus S$  is
adjacent to a vertex $u\in D$ such that $deg(v)\geq deg(u)$ (see \cite{Boutrig}). The minimum cardinality of a weak dominating set of $G$ is denoted by $\gamma_w(G)$. Boutrig and  Chellali proved that the relation $\gamma_w(G)+\frac{3}{\Delta+1}\gamma_{st}(G)\leq n$ holds for any connected graph of order $n\geq 3.$ Alikhani, Ghanbari and Zaherifard \cite{sub} examined the effects on $\gamma_{st}(G)$ when $G$ is modified by  edge deletion, edge subdivision and edge contraction. Also they studied the strong domination number of $k$-subdivision of $G$.   

Motivated by counting of the number of dominating sets of a graph and domination polynomial (see e.g. \cite{euro,saeid1}), the number of the strong dominating sets for certain
graphs has studied in \cite{JAS}.  

Let $e$ be an edge of a connected simple graph $G$. The graph obtained by removing  an edge $e$ from $G$ is denoted by $G-e$.
The edge subdivision operation for an edge $\{u,v\}\in E$ is the deletion of $\{u,v\}$ from $G$ and the addition of two edges $\{u,w\}$ and $\{w,v\}$ along with the new vertex $w$. 
A graph which has been derived from $G$ by deleting a vertex $v$ is denoted by $G-v$. The contraction 
of $v$ in $G$ denoted by $G/v$ is the graph obtained by deleting $v$ and putting 
a clique on the (open) neighbourhood of $v$.    
An edge contraction is an operation that removes an edge from a graph while simultaneously merging the two vertices that it previously joined. The resulting induced graph is written as $G/e$.

In this paper,   we examine the effects on $\gamma_{st}(G)$ when $G$ is modified by operations such as vertex deletion, vertex contraction and  edge contraction

\section{Main Results}

 In this section, we study  the effects on $\gamma_{st}(G)$ when $G$ is modified by some operations. First we consider the vertex deletion.

	\begin{theorem}\label{thm:G-v}
 If $G=(V,E)$ is  a graph and $v\in V$, then,
 $$\gst (G)-\deg(v)\leq\gst(G-v)\leq \gst (G)+\deg(v)-1.$$
 Furthermore, these bounds are tight.
	\end{theorem}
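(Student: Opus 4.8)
The plan is to prove the two inequalities separately, in each case taking a minimum strong dominating set for one graph and modifying it into a strong dominating set for the other, paying careful attention to how vertex degrees change when $v$ is removed. The key subtlety throughout is that deleting $v$ lowers the degree of each of its $\deg(v)$ neighbors by one, which can break the degree condition $\deg(x)\leq\deg(y)$ that underpins strong domination.

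For the upper bound $\gst(G-v)\leq\gst(G)+\deg(v)-1$, I would start with a $\gst$-set $D$ of $G$ and build a strong dominating set of $G-v$. First discard $v$ from $D$ if it is present. The danger is that some vertex $x\in V\setminus D$ was strongly dominated in $G$ only by $v$, or was dominated by a neighbor $y\in D$ whose degree advantage over $x$ has now been lost (if $y$ was adjacent to $v$ but $x$ was not, then in $G-v$ we may have $\deg(x)>\deg(y)$). Both failure modes are confined to the closed neighborhood of $v$. The clean fix is to add all neighbors of $v$ directly into the set: these are at most $\deg(v)$ vertices, so together with $D\setminus\{v\}$ we get a set of size at most $\gst(G)+\deg(v)-1$. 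Every neighbor of $v$ is now in the set, and every other vertex keeps whatever strong dominator it had in $G$ (one must check that a dominator $y\notin N[v]$ retains its degree and its edge to $x$, which it does since neither $x$, $y$, nor the edge $xy$ touches $v$). I would verify this case analysis carefully, as it is the heart of the argument.

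For the lower bound $\gst(G)-\deg(v)\leq\gst(G-v)$, equivalently $\gst(G)\leq\gst(G-v)+\deg(v)$, I would take a $\gst$-set $D'$ of $G-v$ and enlarge it to a strong dominating set of $G$. The only vertices not automatically handled are $v$ itself and possibly its neighbors, whose degrees are one higher in $G$ than in $G-v$. Adding $v$ together with all its neighbors to $D'$ produces a set of size at most $\gst(G-v)+\deg(v)+1$, which is slightly weaker than claimed; to recover the exact bound I would instead add only the $\deg(v)$ neighbors of $v$ and argue that $v$ itself then becomes strongly dominated (each neighbor is in the set, and $v$ needs just one neighbor $y$ with $\deg(v)\leq\deg(y)$ in $G$—or, failing a degree comparison, one can include $v$ but drop a redundant vertex). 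This degree-counting balancing act is where the main obstacle lies, and I expect the argument to require splitting into cases according to whether $v$ has a neighbor of sufficiently high degree.

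Finally, to establish tightness I would exhibit explicit small families attaining each bound. For the upper bound, a star $K_{1,n}$ with $v$ a leaf, or a path, should force the extra $\deg(v)-1$ term; for the lower bound, a graph where $v$ is a high-degree vertex whose deletion allows a much smaller strong dominating set (again a star centered at $v$ is the natural candidate, since removing the center leaves isolated vertices). I would present one concrete graph for each inequality and compute both $\gst$ values directly to confirm equality.
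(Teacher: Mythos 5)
Your constructions for the two inequalities follow essentially the same route as the paper: take a $\gst$-set of one graph, adjoin $N(v)$ (and possibly $v$), and split into cases according to whether $v$ has a neighbour of degree at least $\deg(v)$. Two remarks on that part. First, your size count $|(D\setminus\{v\})\cup N(v)|\le \gst(G)+\deg(v)-1$ only goes through once you note that either $v\in D$ or, since $v$ must be strongly dominated, some neighbour of $v$ already lies in $D$; the paper makes exactly this observation and you should add the sentence. Second, you correctly isolate the dangerous failure mode --- a vertex $x\notin N[v]$ whose only strong dominator $y$ lies in $N(v)\cap D$ with $\deg_G(y)=\deg_G(x)$, so that $y$ loses its degree advantage in $G-v$ --- but your claim that this is ``confined to the closed neighborhood of $v$'' and hence repaired by inserting $N(v)$ is false: such an $x$ is outside $N[v]$, is not added to the set, and acquires no new dominator. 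Concretely, for $G=P_4$ with $v$ an endpoint and $D$ consisting of $v$'s neighbour and the far endpoint, the constructed set fails to strongly dominate the middle vertex of the resulting $P_3$. The paper's proof silently has the same defect, so the inequality itself is not in doubt here, but since you raised the issue you must either genuinely resolve it (for instance by also adjoining or re-dominating such vertices) or not assert that adding $N(v)$ handles it.

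The part that is outright wrong is the tightness discussion: your examples are swapped. The star $K_{1,n}$ with $v$ the \emph{center} witnesses the \emph{upper} bound, exactly as in the paper: $\gst(K_{1,n})=1$, while $K_{1,n}-v$ consists of $n$ isolated vertices, each of which must lie in every strong dominating set, so $\gst(G-v)=n=\gst(G)+\deg(v)-1$. Your observation that ``removing the center leaves isolated vertices'' therefore shows the strong domination number going \emph{up} by $\deg(v)-1$, not down; this graph cannot attain the lower bound, which requires $\gst(G-v)=\gst(G)-\deg(v)$, a \emph{drop} of $\deg(v)$ upon deleting $v$. Producing such a graph is the nontrivial part of the tightness claim, and the paper does it with a bespoke construction (Figure 1) in which every vertex of $N(v)$ is forced into every $\gst$-set of $G$ yet all of $N(v)$ becomes redundant in $G-v$. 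Finally, taking $v$ to be a leaf of a star (or an endpoint of a path) for the upper bound is only degenerately tight, since then $\deg(v)-1=0$; it does not show that the additive term $\deg(v)-1$ is ever needed.
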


	\begin{proof}
First we consider the upper bound. Suppose that $D$ is a $\gst$-set of $G$. 
If $v\in D$, then $\left(D\cup N(v)\right)\setminus\{v\}$ is a strong dominating set of $G-v$ and we are done. 
If $v\notin D$, then there exists $u\in N(v)$ such that $u$ is strong dominate  $v$. So $D\cup N(v)$ is a strong dominating set with size at most $\gst (G)+\deg(v)-1$.
Therefore we have $\gst(G-v)\leq \gst (G)+\deg(v)-1$. The equality holds for the star graph, and $v$ is the universal vertex. Now, we obtain  the lower bound. First we consider  $G-v$ and suppose that $S$ is a $\gst$-set of $G-v$. We have two cases for $v$ in $G$:
\begin{itemize}
\item[(i)]
$\deg (v)> \deg (u)$ for all $u\in N(v)$. Then clearly $S\cup\{v\}$ is a strong dominating set for $G$. So $\gst (G)\leq\gst(G-v)+1$.

\item[(ii)]
There exists $u\in N(v)$ such that $\deg(u)\geq \deg(v)$. So  $S\cup N(v)$ is a strong dominating set for $G$ and so $\gst (G)\leq\gst(G-v)+\deg (v)$.
\end{itemize}
Therefore we have $\gst(G-v)\geq \gst (G)-\deg(v)$. Now, we show that this bound is tight. Consider Figure \ref{fig:G-v-lower}. The set of black vertices is a $\gst$-set of $G$, say $D$. Now, $D\setminus N(v)$ is a $\gst$-set of $G-v$, and we have the result.
\qed
	\end{proof}

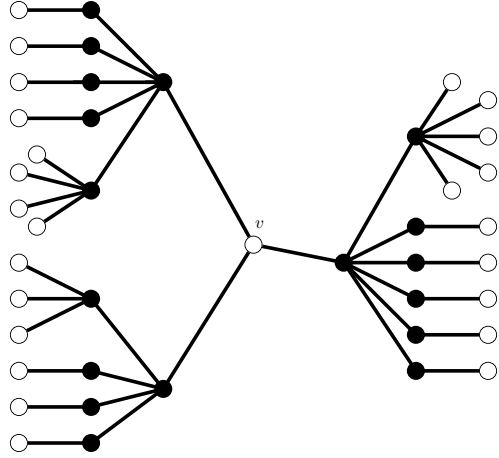
\begin{figure}
\begin{center}
\psscalebox{0.6 0.6}
{
\begin{pspicture}(0,-6.8)(10.802778,3.202778)
\psline[linecolor=black, linewidth=0.08](9.001389,0.20138916)(10.601389,1.0013891)(10.601389,1.0013891)
\psline[linecolor=black, linewidth=0.08](9.001389,0.20138916)(10.601389,0.20138916)(10.601389,0.20138916)
\psline[linecolor=black, linewidth=0.08](9.001389,0.20138916)(10.601389,-0.5986108)(10.601389,-0.5986108)
\psline[linecolor=black, linewidth=0.08](9.001389,0.20138916)(9.801389,-0.99861085)(9.801389,-0.99861085)
\psline[linecolor=black, linewidth=0.08](9.001389,0.20138916)(9.801389,1.4013891)(9.801389,1.4013891)
\psline[linecolor=black, linewidth=0.08](9.001389,-1.7986108)(10.601389,-1.7986108)(10.601389,-1.7986108)
\psline[linecolor=black, linewidth=0.08](9.001389,-2.5986109)(10.601389,-2.5986109)(10.601389,-2.5986109)
\psline[linecolor=black, linewidth=0.08](9.001389,-3.3986108)(10.601389,-3.3986108)(10.601389,-3.3986108)
\psline[linecolor=black, linewidth=0.08](9.001389,-4.198611)(10.601389,-4.198611)(10.601389,-4.198611)
\psline[linecolor=black, linewidth=0.08](9.001389,-4.998611)(10.601389,-4.998611)(10.601389,-4.998611)
\psline[linecolor=black, linewidth=0.08](7.4013886,-2.5986109)(9.001389,0.20138916)(9.001389,0.20138916)
\psline[linecolor=black, linewidth=0.08](7.4013886,-2.5986109)(9.001389,-1.7986108)(9.001389,-1.7986108)
\psline[linecolor=black, linewidth=0.08](7.4013886,-2.5986109)(9.001389,-2.5986109)(9.001389,-2.5986109)
\psline[linecolor=black, linewidth=0.08](7.4013886,-2.5986109)(9.001389,-3.3986108)(9.001389,-3.3986108)
\psline[linecolor=black, linewidth=0.08](7.4013886,-2.5986109)(9.001389,-4.198611)(9.001389,-4.198611)
\psline[linecolor=black, linewidth=0.08](7.4013886,-2.5986109)(9.001389,-4.998611)(9.001389,-4.998611)
\psline[linecolor=black, linewidth=0.08](0.20138885,3.0013893)(1.8013889,3.0013893)(1.8013889,3.0013893)
\psline[linecolor=black, linewidth=0.08](0.20138885,2.201389)(1.8013889,2.201389)(1.8013889,2.201389)
\psline[linecolor=black, linewidth=0.08](0.20138885,1.4013891)(1.8013889,1.4013891)(1.8013889,1.4013891)
\psline[linecolor=black, linewidth=0.08](0.20138885,0.60138917)(1.8013889,0.60138917)(1.8013889,0.60138917)
\psline[linecolor=black, linewidth=0.08](3.401389,1.4013891)(1.4013889,1.4013891)(1.4013889,1.4013891)
\psline[linecolor=black, linewidth=0.08](1.8013889,2.201389)(3.401389,1.4013891)(3.0013888,1.4013891)
\psline[linecolor=black, linewidth=0.08](1.8013889,3.0013893)(3.401389,1.4013891)(3.401389,1.4013891)
\psline[linecolor=black, linewidth=0.08](3.401389,1.4013891)(1.8013889,0.60138917)(1.8013889,0.60138917)
\psline[linecolor=black, linewidth=0.08](0.20138885,-0.5986108)(1.8013889,-0.99861085)(1.8013889,-0.99861085)
\psline[linecolor=black, linewidth=0.08](1.8013889,-0.99861085)(0.6013889,-0.19861084)(0.6013889,-0.19861084)
\psline[linecolor=black, linewidth=0.08](0.20138885,-1.3986108)(1.8013889,-0.99861085)(1.8013889,-0.99861085)
\psline[linecolor=black, linewidth=0.08](1.8013889,-0.99861085)(0.6013889,-1.7986108)(0.6013889,-1.7986108)
\psline[linecolor=black, linewidth=0.08](1.8013889,-0.99861085)(3.401389,1.4013891)(3.401389,1.4013891)
\psline[linecolor=black, linewidth=0.08](0.20138885,-2.5986109)(1.8013889,-3.3986108)(0.20138885,-3.3986108)(0.20138885,-3.3986108)
\psline[linecolor=black, linewidth=0.08](0.20138885,-4.198611)(1.8013889,-3.3986108)(1.8013889,-3.3986108)
\psline[linecolor=black, linewidth=0.08](0.20138885,-4.998611)(1.8013889,-4.998611)(1.8013889,-4.998611)
\psline[linecolor=black, linewidth=0.08](0.20138885,-5.7986107)(1.8013889,-5.7986107)(1.8013889,-5.7986107)
\psline[linecolor=black, linewidth=0.08](0.20138885,-6.598611)(1.8013889,-6.598611)(1.8013889,-6.598611)
\psline[linecolor=black, linewidth=0.08](1.8013889,-3.3986108)(3.401389,-5.398611)(1.8013889,-4.998611)(1.8013889,-4.998611)
\psline[linecolor=black, linewidth=0.08](1.8013889,-5.7986107)(3.401389,-5.398611)(3.401389,-5.398611)
\psline[linecolor=black, linewidth=0.08](3.401389,-5.398611)(1.8013889,-6.598611)(1.8013889,-6.598611)
\psdots[linecolor=black, dotsize=0.4](1.8013889,-0.99861085)
\psdots[linecolor=black, dotsize=0.4](1.8013889,0.60138917)
\psdots[linecolor=black, dotsize=0.4](1.8013889,1.4013891)
\psdots[linecolor=black, dotsize=0.4](1.8013889,2.201389)
\psdots[linecolor=black, dotsize=0.4](1.8013889,3.0013893)
\psdots[linecolor=black, dotsize=0.4](1.8013889,-3.3986108)
\psdots[linecolor=black, dotsize=0.4](1.8013889,-4.998611)
\psdots[linecolor=black, dotsize=0.4](1.8013889,-5.7986107)
\psdots[linecolor=black, dotsize=0.4](1.8013889,-6.598611)
\psdots[linecolor=black, dotsize=0.4](9.001389,0.20138916)
\psdots[linecolor=black, dotsize=0.4](9.001389,-1.7986108)
\psdots[linecolor=black, dotsize=0.4](9.001389,-2.5986109)
\psdots[linecolor=black, dotsize=0.4](9.001389,-3.3986108)
\psdots[linecolor=black, dotsize=0.4](9.001389,-4.198611)
\psdots[linecolor=black, dotsize=0.4](9.001389,-4.998611)
\psline[linecolor=black, linewidth=0.08](5.4013886,-2.1986108)(3.401389,1.4013891)(3.401389,1.4013891)
\psline[linecolor=black, linewidth=0.08](3.401389,-5.398611)(5.4013886,-2.1986108)(5.4013886,-2.1986108)
\psline[linecolor=black, linewidth=0.08](7.4013886,-2.5986109)(5.4013886,-2.1986108)(5.4013886,-2.1986108)
\psdots[linecolor=black, dotsize=0.4](7.4013886,-2.5986109)
\psdots[linecolor=black, dotsize=0.4](3.401389,1.4013891)
\psdots[linecolor=black, dotsize=0.4](3.401389,-5.398611)
\psdots[linecolor=black, dotstyle=o, dotsize=0.4, fillcolor=white](9.801389,1.4013891)
\psdots[linecolor=black, dotstyle=o, dotsize=0.4, fillcolor=white](10.601389,1.0013891)
\psdots[linecolor=black, dotstyle=o, dotsize=0.4, fillcolor=white](10.601389,0.20138916)
\psdots[linecolor=black, dotstyle=o, dotsize=0.4, fillcolor=white](10.601389,-0.5986108)
\psdots[linecolor=black, dotstyle=o, dotsize=0.4, fillcolor=white](9.801389,-0.99861085)
\psdots[linecolor=black, dotstyle=o, dotsize=0.4, fillcolor=white](10.601389,-1.7986108)
\psdots[linecolor=black, dotstyle=o, dotsize=0.4, fillcolor=white](10.601389,-2.5986109)
\psdots[linecolor=black, dotstyle=o, dotsize=0.4, fillcolor=white](10.601389,-3.3986108)
\psdots[linecolor=black, dotstyle=o, dotsize=0.4, fillcolor=white](10.601389,-4.198611)
\psdots[linecolor=black, dotstyle=o, dotsize=0.4, fillcolor=white](10.601389,-4.998611)
\psdots[linecolor=black, dotstyle=o, dotsize=0.4, fillcolor=white](5.4013886,-2.1986108)
\psdots[linecolor=black, dotstyle=o, dotsize=0.4, fillcolor=white](0.20138885,3.0013893)
\psdots[linecolor=black, dotstyle=o, dotsize=0.4, fillcolor=white](0.20138885,2.201389)
\psdots[linecolor=black, dotstyle=o, dotsize=0.4, fillcolor=white](0.20138885,1.4013891)
\psdots[linecolor=black, dotstyle=o, dotsize=0.4, fillcolor=white](0.20138885,0.60138917)
\psdots[linecolor=black, dotstyle=o, dotsize=0.4, fillcolor=white](0.6013889,-0.19861084)
\psdots[linecolor=black, dotstyle=o, dotsize=0.4, fillcolor=white](0.20138885,-0.5986108)
\psdots[linecolor=black, dotstyle=o, dotsize=0.4, fillcolor=white](0.20138885,-1.3986108)
\psdots[linecolor=black, dotstyle=o, dotsize=0.4, fillcolor=white](0.6013889,-1.7986108)
\psdots[linecolor=black, dotstyle=o, dotsize=0.4, fillcolor=white](0.20138885,-2.5986109)
\psdots[linecolor=black, dotstyle=o, dotsize=0.4, fillcolor=white](0.20138885,-3.3986108)
\psdots[linecolor=black, dotstyle=o, dotsize=0.4, fillcolor=white](0.20138885,-4.198611)
\psdots[linecolor=black, dotstyle=o, dotsize=0.4, fillcolor=white](0.20138885,-4.998611)
\psdots[linecolor=black, dotstyle=o, dotsize=0.4, fillcolor=white](0.20138885,-5.7986107)
\psdots[linecolor=black, dotstyle=o, dotsize=0.4, fillcolor=white](0.20138885,-6.598611)
\rput[bl](5.441389,-1.8386109){$v$}
\end{pspicture}
}
\end{center}
\caption{\small{Graph $G$ which shows the tightness of the lower bound of Theorem \ref{thm:G-v}}}\label{fig:G-v-lower}
\end{figure}


The following theorem gives bounds for the strong domination number of a graph $G/v$, where $G/v$ is a graph obtained by $G$ and contraction of a vertex $v$. We recall that a vertex $v$ is a pendant vertex, if ${\rm deg}(v)=1$.  

	\begin{theorem}\label{thm:G/v-nonpendant}
If  $G=(V,E)$ is a graph and $v\in V$ is not a pendant vertex, then,
 $$\gst (G)-\deg (v)+1\leq \gst(G/v)\leq \gst (G)+1.$$
  Furthermore, these bounds are tight.
	\end{theorem}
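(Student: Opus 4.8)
The plan is to prove each inequality by turning an optimal strong dominating set of one graph into a feasible strong dominating set of the other, the whole difficulty being that contraction changes degrees and can therefore reverse the inequality $\deg(x)\le\deg(y)$ that underlies strong domination. Write $d=\deg(v)\ge 2$ and $N(v)=\{u_1,\dots,u_d\}$. First I would record the degree bookkeeping: a vertex outside $N(v)\cup\{v\}$ keeps its degree, $v$ disappears, and a neighbour $u_i$ satisfies $\deg_{G/v}(u_i)=\deg_G(u_i)+(d-2)-a_i$, where $a_i$ is the number of vertices of $N(v)\setminus\{u_i\}$ already adjacent to $u_i$ in $G$; in particular $\deg_{G/v}(u_i)\ge \deg_G(u_i)-1$, with a strict drop only when $u_i$ is adjacent in $G$ to every other neighbour of $v$. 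Throughout, let $u^\ast$ denote a vertex of $N(v)$ of maximum degree in $G/v$; since $N(v)$ is a clique in $G/v$, the vertex $u^\ast$ strongly dominates every other vertex of $N(v)$ in $G/v$.

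For the upper bound $\gst(G/v)\le\gst(G)+1$, I would take a $\gst$-set $D$ of $G$ and put $D'=(D\setminus\{v\})\cup\{u^\ast\}$, so that $|D'|\le\gst(G)+1$. Then I check that $D'$ strongly dominates $G/v$ by cases: each vertex of $N(v)\setminus\{u^\ast\}$ is handled by $u^\ast$, while a vertex $x$ outside $N(v)\cup\{v\}$ keeps its old neighbours and degree and so is dominated in $G/v$ by the same $y\in D$ that dominated it in $G$ — provided $\deg_{G/v}(y)\ge\deg_G(y)$.

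The main obstacle lives precisely in that last clause. If the $G$-dominator $y$ of such an $x$ lies in $N(v)$ and is adjacent in $G$ to all other neighbours of $v$, then $\deg_{G/v}(y)=\deg_G(y)-1$ and the required inequality $\deg_{G/v}(x)\le\deg_{G/v}(y)$ may fail. Using the bookkeeping above, this drop is harmless whenever the domination in $G$ was strict, i.e.\ $\deg_G(x)<\deg_G(y)$, since then $\deg_{G/v}(x)=\deg_G(x)\le\deg_G(y)-1=\deg_{G/v}(y)$. The genuinely delicate sub-case is the saturated-and-tight one, $\deg_G(x)=\deg_G(y)$ with $y$ joined to all of $N(v)$; here one must either re-choose a second $D$-dominator of $x$ or exploit the forced clique structure around $y$. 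Making this dichotomy exhaustive so that no far vertex is left orphaned is the crux, and it is exactly where the single extra vertex $u^\ast$ is both needed and enough; it is also the configuration that feeds the tightness example.

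For the lower bound I would rewrite it as $\gst(G)\le\gst(G/v)+\deg(v)-1$ and start from a $\gst$-set $S$ of $G/v$. Adding the full neighbourhood, $S\cup N(v)$, already dominates $G$ except possibly at $v$ and yields the weaker estimate $\gst(G)\le\gst(G/v)+\deg(v)$: the clique vertices now sit in the set and need no domination, and each far vertex is dominated much as before, this time using $\deg_G(u_i)\le\deg_{G/v}(u_i)$ in the favourable direction. To gain the final $-1$ I would split on $v$ as in the two cases of Theorem~\ref{thm:G-v}: if some neighbour of $v$ has degree at least $\deg(v)$, then $v$ is already strongly dominated and one discards the clique vertex dominated by $u^\ast$ without losing domination; if $v$ is the unique local maximum, one keeps $v$ and again discards one neighbour. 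Either way the resulting set has size at most $|S|+\deg(v)-1$. Finally, for tightness I would exhibit explicit families realising equality — for the lower bound a graph in which $v$ has large degree and a sparse neighbourhood, so the degree gains under contraction are maximal, and for the upper bound a small graph where contracting $v$ forces one extra vertex into every strong dominating set — on which computing $\gst$ is routine once the domination pattern is read off.
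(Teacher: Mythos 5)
Your constructions coincide with the paper's --- $(D\setminus\{v\})\cup\{u^\ast\}$ for the upper bound and $S\cup N(v)$ or $S\cup\{v\}$ for the lower bound --- but in both directions the step that is actually delicate is either left open or argued with the inequality pointing the wrong way. For the upper bound you correctly isolate the dangerous configuration: $y\in D\cap N(v)$ adjacent in $G$ to all of $N(v)\setminus\{y\}$, so that $\deg_{G/v}(y)=\deg_G(y)-1$, while $y$ was the tight strong dominator of some $x\notin N(v)\cup\{v\}$ with $\deg_G(x)=\deg_G(y)$. But you then assert that ``the single extra vertex $u^\ast$ is both needed and enough'' without an argument, and on its face it is not: $u^\ast$ strongly dominates only its own neighbours in $G/v$, and the orphaned vertex $x$ lies outside $N(v)$ and need not be adjacent to $u^\ast$ at all. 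Either you must show that such an $x$ always has an alternative dominator (re-choosing inside $D$, or exploiting the structure forced on $y$'s neighbourhood), or you must modify $D'$ differently; as written the dichotomy is named but not closed. (For what it is worth, the paper's own proof simply asserts that ``each vertex is strong dominated by the same vertex as before or possibly by $u$'' and never confronts this degree drop, so you have at least located the real difficulty --- you just have not resolved it.)

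The lower bound has two concrete problems. First, for a far vertex $x\notin N(v)\cup\{v\}$ that is strongly dominated in $G/v$ by some $u_i\in S\cap N(v)$, what you know is $\deg_G(x)=\deg_{G/v}(x)\le\deg_{G/v}(u_i)$, whereas what you need in $G$ is $\deg_G(x)\le\deg_G(u_i)$; by your own bookkeeping $\deg_G(u_i)=\deg_{G/v}(u_i)-(d-2-a_i)$ can be smaller than $\deg_{G/v}(u_i)$ by as much as $d-2$, so your ``favourable direction'' is in fact the unfavourable one, and $S\cup N(v)$ is not obviously a strong dominating set of $G$. Second, your device for gaining the final $-1$ --- ``discard the clique vertex dominated by $u^\ast$'' --- uses that $N(v)$ is a clique, but it is a clique only in $G/v$, not in $G$; the discarded neighbour may be nonadjacent in $G$ to $u^\ast$ and to every other vertex of your set, and is then left undominated. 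The paper instead gets the $-1$ by counting: it splits on how $N(v)$ meets $S$, observing that when some neighbour of $v$ already lies in $S$ one has $|N(v)\setminus S|\le\deg(v)-1$, so $S\cup N(v)$ already has size at most $|S|+\deg(v)-1$ with nothing removed, while in the remaining cases $S\cup\{v\}$ has size $|S|+1\le|S|+\deg(v)-1$ because $\deg(v)\ge2$. You would need to adopt that kind of counting, and in both directions still supply the missing verification for far vertices whose only dominators sit in $N(v)$.
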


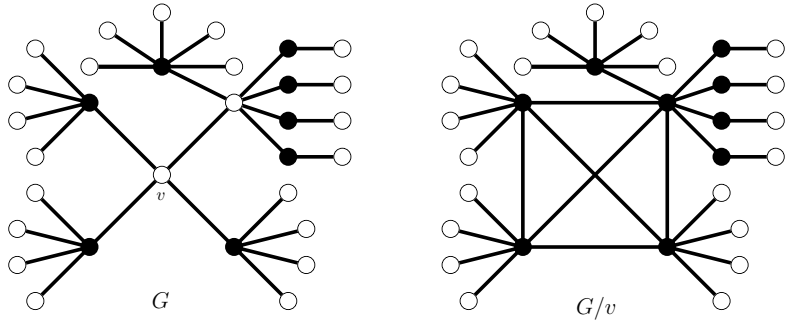
\begin{figure}
\begin{center}
\psscalebox{0.6 0.6}
{
\begin{pspicture}(0,-6.109306)(17.202778,0.91208345)
\psline[linecolor=black, linewidth=0.08](3.401389,-2.8893054)(1.8013889,-1.2893054)(1.8013889,-1.2893054)
\psline[linecolor=black, linewidth=0.08](3.401389,-2.8893054)(5.001389,-1.2893054)(5.001389,-1.2893054)
\psline[linecolor=black, linewidth=0.08](3.401389,-2.8893054)(5.001389,-4.4893055)(5.001389,-4.4893055)
\psline[linecolor=black, linewidth=0.08](3.401389,-2.8893054)(1.8013889,-4.4893055)(1.8013889,-4.4893055)
\psline[linecolor=black, linewidth=0.08](5.001389,-4.4893055)(6.201389,-3.2893054)(6.201389,-3.2893054)
\psline[linecolor=black, linewidth=0.08](5.001389,-4.4893055)(6.601389,-4.0893054)(6.601389,-4.0893054)
\psline[linecolor=black, linewidth=0.08](5.001389,-4.4893055)(6.601389,-4.8893056)(6.601389,-4.8893056)
\psline[linecolor=black, linewidth=0.08](5.001389,-4.4893055)(6.201389,-5.6893053)(6.201389,-5.6893053)
\psline[linecolor=black, linewidth=0.08](1.8013889,-4.4893055)(0.6013889,-5.6893053)(0.6013889,-5.6893053)
\psline[linecolor=black, linewidth=0.08](0.20138885,-4.8893056)(1.8013889,-4.4893055)(1.8013889,-4.4893055)
\psline[linecolor=black, linewidth=0.08](0.20138885,-4.0893054)(1.8013889,-4.4893055)(1.8013889,-4.4893055)
\psline[linecolor=black, linewidth=0.08](0.6013889,-3.2893054)(1.8013889,-4.4893055)(1.8013889,-4.4893055)
\psline[linecolor=black, linewidth=0.08](1.8013889,-1.2893054)(0.6013889,-0.08930542)(0.6013889,-0.08930542)
\psline[linecolor=black, linewidth=0.08](0.20138885,-0.8893054)(1.8013889,-1.2893054)(1.8013889,-1.2893054)
\psline[linecolor=black, linewidth=0.08](0.20138885,-1.6893054)(1.8013889,-1.2893054)(1.8013889,-1.2893054)
\psline[linecolor=black, linewidth=0.08](0.6013889,-2.4893055)(1.8013889,-1.2893054)(1.8013889,-1.2893054)
\psline[linecolor=black, linewidth=0.08](5.001389,-1.2893054)(6.201389,-2.4893055)(6.201389,-2.4893055)
\psline[linecolor=black, linewidth=0.08](5.001389,-1.2893054)(6.201389,-1.6893054)(6.201389,-1.6893054)
\psline[linecolor=black, linewidth=0.08](5.001389,-1.2893054)(6.201389,-0.8893054)(6.201389,-0.8893054)
\psline[linecolor=black, linewidth=0.08](5.001389,-1.2893054)(6.201389,-0.08930542)(6.201389,-0.08930542)
\psline[linecolor=black, linewidth=0.08](5.001389,-1.2893054)(3.401389,-0.4893054)(3.401389,-0.4893054)
\psline[linecolor=black, linewidth=0.08](6.201389,-0.08930542)(7.4013886,-0.08930542)(7.4013886,-0.08930542)
\psline[linecolor=black, linewidth=0.08](6.201389,-0.8893054)(7.4013886,-0.8893054)(7.4013886,-0.8893054)
\psline[linecolor=black, linewidth=0.08](6.201389,-1.6893054)(7.4013886,-1.6893054)(7.4013886,-1.6893054)
\psline[linecolor=black, linewidth=0.08](6.201389,-2.4893055)(7.4013886,-2.4893055)(7.4013886,-2.4893055)
\psline[linecolor=black, linewidth=0.08](3.401389,-0.4893054)(1.8013889,-0.4893054)(3.401389,-0.4893054)(5.001389,-0.4893054)(5.001389,-0.4893054)
\psline[linecolor=black, linewidth=0.08](3.401389,-0.4893054)(4.601389,0.31069458)(4.601389,0.31069458)
\psline[linecolor=black, linewidth=0.08](3.401389,-0.4893054)(2.2013888,0.31069458)(2.2013888,0.31069458)
\psline[linecolor=black, linewidth=0.08](3.401389,-0.4893054)(3.401389,0.71069455)(3.401389,0.71069455)
\psdots[linecolor=black, dotsize=0.4](1.8013889,-4.4893055)
\psdots[linecolor=black, dotsize=0.4](1.8013889,-1.2893054)
\psdots[linecolor=black, dotsize=0.4](5.001389,-4.4893055)
\psdots[linecolor=black, dotsize=0.4](6.201389,-0.08930542)
\psdots[linecolor=black, dotsize=0.4](6.201389,-0.8893054)
\psdots[linecolor=black, dotsize=0.4](6.201389,-1.6893054)
\psdots[linecolor=black, dotsize=0.4](6.201389,-2.4893055)
\psdots[linecolor=black, dotsize=0.4](3.401389,-0.4893054)
\psdots[linecolor=black, dotstyle=o, dotsize=0.4, fillcolor=white](7.4013886,-0.08930542)
\psdots[linecolor=black, dotstyle=o, dotsize=0.4, fillcolor=white](7.4013886,-0.8893054)
\psdots[linecolor=black, dotstyle=o, dotsize=0.4, fillcolor=white](7.4013886,-1.6893054)
\psdots[linecolor=black, dotstyle=o, dotsize=0.4, fillcolor=white](7.4013886,-2.4893055)
\psdots[linecolor=black, dotstyle=o, dotsize=0.4, fillcolor=white](6.201389,-3.2893054)
\psdots[linecolor=black, dotstyle=o, dotsize=0.4, fillcolor=white](6.601389,-4.0893054)
\psdots[linecolor=black, dotstyle=o, dotsize=0.4, fillcolor=white](6.601389,-4.8893056)
\psdots[linecolor=black, dotstyle=o, dotsize=0.4, fillcolor=white](6.201389,-5.6893053)
\psdots[linecolor=black, dotstyle=o, dotsize=0.4, fillcolor=white](0.6013889,-5.6893053)
\psdots[linecolor=black, dotstyle=o, dotsize=0.4, fillcolor=white](0.20138885,-4.8893056)
\psdots[linecolor=black, dotstyle=o, dotsize=0.4, fillcolor=white](0.20138885,-4.0893054)
\psdots[linecolor=black, dotstyle=o, dotsize=0.4, fillcolor=white](0.6013889,-3.2893054)
\psdots[linecolor=black, dotstyle=o, dotsize=0.4, fillcolor=white](0.6013889,-2.4893055)
\psdots[linecolor=black, dotstyle=o, dotsize=0.4, fillcolor=white](0.20138885,-1.6893054)
\psdots[linecolor=black, dotstyle=o, dotsize=0.4, fillcolor=white](0.20138885,-0.8893054)
\psdots[linecolor=black, dotstyle=o, dotsize=0.4, fillcolor=white](0.6013889,-0.08930542)
\psdots[linecolor=black, dotstyle=o, dotsize=0.4, fillcolor=white](1.8013889,-0.4893054)
\psdots[linecolor=black, dotstyle=o, dotsize=0.4, fillcolor=white](2.2013888,0.31069458)
\psdots[linecolor=black, dotstyle=o, dotsize=0.4, fillcolor=white](3.401389,0.71069455)
\psdots[linecolor=black, dotstyle=o, dotsize=0.4, fillcolor=white](4.601389,0.31069458)
\psdots[linecolor=black, dotstyle=o, dotsize=0.4, fillcolor=white](5.001389,-0.4893054)
\psline[linecolor=black, linewidth=0.08](14.601389,-4.4893055)(15.801389,-3.2893054)(15.801389,-3.2893054)
\psline[linecolor=black, linewidth=0.08](14.601389,-4.4893055)(16.20139,-4.0893054)(16.20139,-4.0893054)
\psline[linecolor=black, linewidth=0.08](14.601389,-4.4893055)(16.20139,-4.8893056)(16.20139,-4.8893056)
\psline[linecolor=black, linewidth=0.08](14.601389,-4.4893055)(15.801389,-5.6893053)(15.801389,-5.6893053)
\psline[linecolor=black, linewidth=0.08](11.401389,-4.4893055)(10.201389,-5.6893053)(10.201389,-5.6893053)
\psline[linecolor=black, linewidth=0.08](9.801389,-4.8893056)(11.401389,-4.4893055)(11.401389,-4.4893055)
\psline[linecolor=black, linewidth=0.08](9.801389,-4.0893054)(11.401389,-4.4893055)(11.401389,-4.4893055)
\psline[linecolor=black, linewidth=0.08](10.201389,-3.2893054)(11.401389,-4.4893055)(11.401389,-4.4893055)
\psline[linecolor=black, linewidth=0.08](11.401389,-1.2893054)(10.201389,-0.08930542)(10.201389,-0.08930542)
\psline[linecolor=black, linewidth=0.08](9.801389,-0.8893054)(11.401389,-1.2893054)(11.401389,-1.2893054)
\psline[linecolor=black, linewidth=0.08](9.801389,-1.6893054)(11.401389,-1.2893054)(11.401389,-1.2893054)
\psline[linecolor=black, linewidth=0.08](10.201389,-2.4893055)(11.401389,-1.2893054)(11.401389,-1.2893054)
\psline[linecolor=black, linewidth=0.08](14.601389,-1.2893054)(15.801389,-2.4893055)(15.801389,-2.4893055)
\psline[linecolor=black, linewidth=0.08](14.601389,-1.2893054)(15.801389,-1.6893054)(15.801389,-1.6893054)
\psline[linecolor=black, linewidth=0.08](14.601389,-1.2893054)(15.801389,-0.8893054)(15.801389,-0.8893054)
\psline[linecolor=black, linewidth=0.08](14.601389,-1.2893054)(15.801389,-0.08930542)(15.801389,-0.08930542)
\psline[linecolor=black, linewidth=0.08](14.601389,-1.2893054)(13.001389,-0.4893054)(13.001389,-0.4893054)
\psline[linecolor=black, linewidth=0.08](15.801389,-0.08930542)(17.001389,-0.08930542)(17.001389,-0.08930542)
\psline[linecolor=black, linewidth=0.08](15.801389,-0.8893054)(17.001389,-0.8893054)(17.001389,-0.8893054)
\psline[linecolor=black, linewidth=0.08](15.801389,-1.6893054)(17.001389,-1.6893054)(17.001389,-1.6893054)
\psline[linecolor=black, linewidth=0.08](15.801389,-2.4893055)(17.001389,-2.4893055)(17.001389,-2.4893055)
\psline[linecolor=black, linewidth=0.08](13.001389,-0.4893054)(11.401389,-0.4893054)(13.001389,-0.4893054)(14.601389,-0.4893054)(14.601389,-0.4893054)
\psline[linecolor=black, linewidth=0.08](13.001389,-0.4893054)(14.201389,0.31069458)(14.201389,0.31069458)
\psline[linecolor=black, linewidth=0.08](13.001389,-0.4893054)(11.801389,0.31069458)(11.801389,0.31069458)
\psline[linecolor=black, linewidth=0.08](13.001389,-0.4893054)(13.001389,0.71069455)(13.001389,0.71069455)
\psdots[linecolor=black, dotsize=0.4](11.401389,-4.4893055)
\psdots[linecolor=black, dotsize=0.4](11.401389,-1.2893054)
\psdots[linecolor=black, dotsize=0.4](14.601389,-4.4893055)
\psdots[linecolor=black, dotsize=0.4](15.801389,-0.08930542)
\psdots[linecolor=black, dotsize=0.4](15.801389,-0.8893054)
\psdots[linecolor=black, dotsize=0.4](15.801389,-1.6893054)
\psdots[linecolor=black, dotsize=0.4](15.801389,-2.4893055)
\psdots[linecolor=black, dotsize=0.4](13.001389,-0.4893054)
\psdots[linecolor=black, dotstyle=o, dotsize=0.4, fillcolor=white](17.001389,-0.08930542)
\psdots[linecolor=black, dotstyle=o, dotsize=0.4, fillcolor=white](17.001389,-0.8893054)
\psdots[linecolor=black, dotstyle=o, dotsize=0.4, fillcolor=white](17.001389,-1.6893054)
\psdots[linecolor=black, dotstyle=o, dotsize=0.4, fillcolor=white](17.001389,-2.4893055)
\psdots[linecolor=black, dotstyle=o, dotsize=0.4, fillcolor=white](15.801389,-3.2893054)
\psdots[linecolor=black, dotstyle=o, dotsize=0.4, fillcolor=white](16.20139,-4.0893054)
\psdots[linecolor=black, dotstyle=o, dotsize=0.4, fillcolor=white](16.20139,-4.8893056)
\psdots[linecolor=black, dotstyle=o, dotsize=0.4, fillcolor=white](15.801389,-5.6893053)
\psdots[linecolor=black, dotstyle=o, dotsize=0.4, fillcolor=white](10.201389,-5.6893053)
\psdots[linecolor=black, dotstyle=o, dotsize=0.4, fillcolor=white](9.801389,-4.8893056)
\psdots[linecolor=black, dotstyle=o, dotsize=0.4, fillcolor=white](9.801389,-4.0893054)
\psdots[linecolor=black, dotstyle=o, dotsize=0.4, fillcolor=white](10.201389,-3.2893054)
\psdots[linecolor=black, dotstyle=o, dotsize=0.4, fillcolor=white](10.201389,-2.4893055)
\psdots[linecolor=black, dotstyle=o, dotsize=0.4, fillcolor=white](9.801389,-1.6893054)
\psdots[linecolor=black, dotstyle=o, dotsize=0.4, fillcolor=white](9.801389,-0.8893054)
\psdots[linecolor=black, dotstyle=o, dotsize=0.4, fillcolor=white](10.201389,-0.08930542)
\psdots[linecolor=black, dotstyle=o, dotsize=0.4, fillcolor=white](11.401389,-0.4893054)
\psdots[linecolor=black, dotstyle=o, dotsize=0.4, fillcolor=white](11.801389,0.31069458)
\psdots[linecolor=black, dotstyle=o, dotsize=0.4, fillcolor=white](13.001389,0.71069455)
\psdots[linecolor=black, dotstyle=o, dotsize=0.4, fillcolor=white](14.201389,0.31069458)
\psdots[linecolor=black, dotstyle=o, dotsize=0.4, fillcolor=white](14.601389,-0.4893054)
\psline[linecolor=black, linewidth=0.08](11.401389,-1.2893054)(14.601389,-1.2893054)(14.601389,-4.4893055)(11.401389,-4.4893055)(11.401389,-1.2893054)(14.601389,-4.4893055)(14.601389,-4.4893055)
\psline[linecolor=black, linewidth=0.08](11.401389,-4.4893055)(14.601389,-1.2893054)(14.601389,-1.2893054)
\psdots[linecolor=black, dotstyle=o, dotsize=0.4, fillcolor=white](3.401389,-2.8893054)
\psdots[linecolor=black, dotstyle=o, dotsize=0.4, fillcolor=white](5.001389,-1.2893054)
\psdots[linecolor=black, dotsize=0.4](14.601389,-1.2893054)
\rput[bl](3.2813888,-3.4493055){$v$}
\rput[bl](3.1813889,-5.8493056){\Large{$G$}}
\rput[bl](12.581388,-6.1093054){\Large{$G/v$}}
\end{pspicture}
}
\end{center}
\caption{\small{Graphs $G$ and $G/v$, which  show the tightness of the upper bound of Theorem \ref{thm:G/v-nonpendant}}}\label{fig:G/v-upper}
\end{figure}

	\begin{proof}
First we obtain  the upper bound. Suppose that $D$ is a $\gst$-set of $G$. First suppose that  
$v\in D$. If  $u\in N(v)$ is the vertex with the maximum degree among others, then  $\left( D\cup \{u\}\right)\setminus\{v\}$ is a strong dominating set of $G$, because each vertex is strong dominated  by the same vertex as before or possibly by $u$. 
Now suppose that $v\notin D$.  If  $w\in N(v)$ is the vertex with the maximum degree among others, then by the same argument, $ D\cup \{w\}$ is a strong dominating set of $G$.
Therefore we have $\gst(G/v)\leq \gst (G)+1$. To  show that this bound is tight, consider graph $G$ and $G/v$ in Figure \ref{fig:G/v-upper}. One can easily check that the set of black vertices is a $\gst$-set of both graphs, and therefore $\gst(G/v)=\gst (G)+1$. 
Now, we obtain  the lower bound. To show the lower bound, first we form $G/v$. Suppose that $S$ is $\gst$-set of $G/v$. We remove all the added edges and add $v$ to form $G$. We consider the following cases:
\begin{itemize}
\item[(i)]
$N(v)\subseteq S$. Clearly $S\cup\{v\}$ is a strong dominating set of $G$ and we have $\gst (G)\leq \gst(G/v)+1$.

\item[(ii)]
$N(v)\subseteq V\setminus S$. So  $S\cup\{v\}$ is a strong dominating set of $G$, because each vertex is strong dominated  by the same vertex as before (and possibly $v$). So $\gst (G)\leq \gst(G/v)+1$.

\item[(iii)]
For all vertices  $u\in N(v)$, $\deg_G(v)\geq \deg_G(u)$. So  by the same argument as Case (ii), $S\cup\{v\}$ is a strong dominating set of $G$, and $\gst (G)\leq \gst(G/v)+1$.

\item[(iv)]
There exists a vertex $u\in N(v)\cap S$ such that $\deg_G(u)\geq \deg_G(v)$. So  $S\cup N(v)$ is a strong dominating set of $G$, because  $v$ is strong dominated  by $u$ and the rest of vertices are strong  dominated  by the same vertices as before. So $\gst (G)\leq \gst(G/v)+\deg(v)-1$.

\item[(v)]
There exists a vertex $u\in N(v)\cap\left( V\setminus S\right)$ such that $\deg_G(u)\geq \deg_G(v)$. If $N(v)\subseteq V\setminus S$, then it is Case (ii). So suppose that there exists a vertex $w$ such that $w\in N(v)\cap S$. Then similar to Case (iv), $S\cup N(v)$ is a strong dominating set of $G$, since $v$ is strong dominated  by $u$, and we are done.

\end{itemize}
Hence in general, we have $\gst (G/v)\geq \gst(G)-\deg(v)+1$. Now, we show that this bound is tight. Consider graph $G$ and $G/v$ in Figure \ref{fig:G/v-lower}. The set of black vertices is a $\gst$-set of both graphs, and  $\gst(G/v)=\gst (G)-\deg(v)+1$.
\qed
	\end{proof}

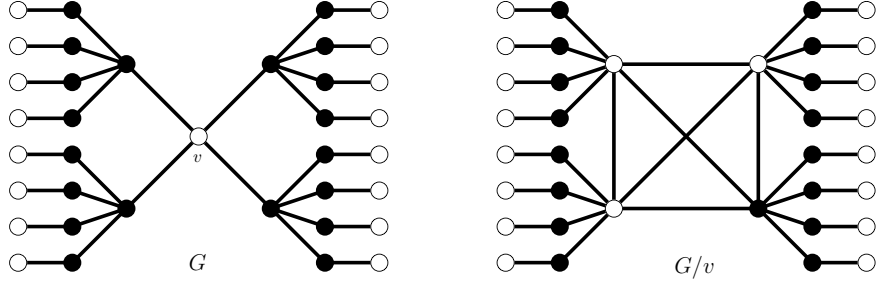
\begin{figure}
\begin{center}
\psscalebox{0.6 0.6}
{
\begin{pspicture}(0,-6.4793057)(19.202778,-0.31791657)
\psline[linecolor=black, linewidth=0.08](4.201389,-3.3193054)(2.601389,-1.7193054)(2.601389,-1.7193054)
\psline[linecolor=black, linewidth=0.08](4.201389,-3.3193054)(5.8013887,-1.7193054)(5.8013887,-1.7193054)
\psline[linecolor=black, linewidth=0.08](4.201389,-3.3193054)(5.8013887,-4.9193053)(5.8013887,-4.9193053)
\psline[linecolor=black, linewidth=0.08](4.201389,-3.3193054)(2.601389,-4.9193053)(2.601389,-4.9193053)
\psline[linecolor=black, linewidth=0.08](5.8013887,-1.7193054)(7.001389,-2.9193053)(7.001389,-2.9193053)
\psline[linecolor=black, linewidth=0.08](5.8013887,-1.7193054)(7.001389,-2.1193054)(7.001389,-2.1193054)
\psline[linecolor=black, linewidth=0.08](5.8013887,-1.7193054)(7.001389,-1.3193054)(7.001389,-1.3193054)
\psline[linecolor=black, linewidth=0.08](5.8013887,-1.7193054)(7.001389,-0.5193054)(7.001389,-0.5193054)
\psline[linecolor=black, linewidth=0.08](7.001389,-0.5193054)(8.201389,-0.5193054)(8.201389,-0.5193054)
\psline[linecolor=black, linewidth=0.08](7.001389,-1.3193054)(8.201389,-1.3193054)(8.201389,-1.3193054)
\psline[linecolor=black, linewidth=0.08](7.001389,-2.1193054)(8.201389,-2.1193054)(8.201389,-2.1193054)
\psline[linecolor=black, linewidth=0.08](7.001389,-2.9193053)(8.201389,-2.9193053)(8.201389,-2.9193053)
\psdots[linecolor=black, dotsize=0.4](7.001389,-0.5193054)
\psdots[linecolor=black, dotsize=0.4](7.001389,-1.3193054)
\psdots[linecolor=black, dotsize=0.4](7.001389,-2.1193054)
\psdots[linecolor=black, dotsize=0.4](7.001389,-2.9193053)
\psdots[linecolor=black, dotstyle=o, dotsize=0.4, fillcolor=white](8.201389,-0.5193054)
\psdots[linecolor=black, dotstyle=o, dotsize=0.4, fillcolor=white](8.201389,-1.3193054)
\psdots[linecolor=black, dotstyle=o, dotsize=0.4, fillcolor=white](8.201389,-2.1193054)
\psdots[linecolor=black, dotstyle=o, dotsize=0.4, fillcolor=white](8.201389,-2.9193053)
\psdots[linecolor=black, dotstyle=o, dotsize=0.4, fillcolor=white](4.201389,-3.3193054)
\rput[bl](4.081389,-3.8793054){$v$}
\rput[bl](3.9813888,-6.2793055){\Large{$G$}}
\psline[linecolor=black, linewidth=0.08](5.8013887,-4.9193053)(7.001389,-6.1193056)(7.001389,-6.1193056)
\psline[linecolor=black, linewidth=0.08](5.8013887,-4.9193053)(7.001389,-5.3193054)(7.001389,-5.3193054)
\psline[linecolor=black, linewidth=0.08](5.8013887,-4.9193053)(7.001389,-4.519305)(7.001389,-4.519305)
\psline[linecolor=black, linewidth=0.08](5.8013887,-4.9193053)(7.001389,-3.7193055)(7.001389,-3.7193055)
\psline[linecolor=black, linewidth=0.08](7.001389,-3.7193055)(8.201389,-3.7193055)(8.201389,-3.7193055)
\psline[linecolor=black, linewidth=0.08](7.001389,-4.519305)(8.201389,-4.519305)(8.201389,-4.519305)
\psline[linecolor=black, linewidth=0.08](7.001389,-5.3193054)(8.201389,-5.3193054)(8.201389,-5.3193054)
\psline[linecolor=black, linewidth=0.08](7.001389,-6.1193056)(8.201389,-6.1193056)(8.201389,-6.1193056)
\psdots[linecolor=black, dotsize=0.4](7.001389,-3.7193055)
\psdots[linecolor=black, dotsize=0.4](7.001389,-4.519305)
\psdots[linecolor=black, dotsize=0.4](7.001389,-5.3193054)
\psdots[linecolor=black, dotsize=0.4](7.001389,-6.1193056)
\psdots[linecolor=black, dotstyle=o, dotsize=0.4, fillcolor=white](8.201389,-3.7193055)
\psdots[linecolor=black, dotstyle=o, dotsize=0.4, fillcolor=white](8.201389,-4.519305)
\psdots[linecolor=black, dotstyle=o, dotsize=0.4, fillcolor=white](8.201389,-5.3193054)
\psdots[linecolor=black, dotstyle=o, dotsize=0.4, fillcolor=white](8.201389,-6.1193056)
\psline[linecolor=black, linewidth=0.08](2.601389,-1.7193054)(1.4013889,-0.5193054)(1.4013889,-0.5193054)
\psline[linecolor=black, linewidth=0.08](1.4013889,-1.3193054)(2.601389,-1.7193054)(2.601389,-1.7193054)
\psline[linecolor=black, linewidth=0.08](1.4013889,-2.1193054)(2.601389,-1.7193054)(2.601389,-1.7193054)
\psline[linecolor=black, linewidth=0.08](1.4013889,-2.9193053)(2.601389,-1.7193054)(2.601389,-1.7193054)
\psline[linecolor=black, linewidth=0.08](1.4013889,-0.5193054)(0.20138885,-0.5193054)(0.20138885,-0.5193054)
\psline[linecolor=black, linewidth=0.08](1.4013889,-1.3193054)(0.20138885,-1.3193054)(0.20138885,-1.3193054)
\psline[linecolor=black, linewidth=0.08](1.4013889,-2.1193054)(0.20138885,-2.1193054)(0.20138885,-2.1193054)
\psline[linecolor=black, linewidth=0.08](1.4013889,-2.9193053)(0.20138885,-2.9193053)(0.20138885,-2.9193053)
\psdots[linecolor=black, dotsize=0.4](1.4013889,-0.5193054)
\psdots[linecolor=black, dotsize=0.4](1.4013889,-1.3193054)
\psdots[linecolor=black, dotsize=0.4](1.4013889,-2.1193054)
\psdots[linecolor=black, dotsize=0.4](1.4013889,-2.9193053)
\psline[linecolor=black, linewidth=0.08](2.601389,-4.9193053)(1.4013889,-3.7193055)(1.4013889,-3.7193055)
\psline[linecolor=black, linewidth=0.08](1.4013889,-4.519305)(2.601389,-4.9193053)(2.601389,-4.9193053)
\psline[linecolor=black, linewidth=0.08](1.4013889,-5.3193054)(2.601389,-4.9193053)(2.601389,-4.9193053)
\psline[linecolor=black, linewidth=0.08](1.4013889,-6.1193056)(2.601389,-4.9193053)(2.601389,-4.9193053)
\psline[linecolor=black, linewidth=0.08](1.4013889,-3.7193055)(0.20138885,-3.7193055)(0.20138885,-3.7193055)
\psline[linecolor=black, linewidth=0.08](1.4013889,-4.519305)(0.20138885,-4.519305)(0.20138885,-4.519305)
\psline[linecolor=black, linewidth=0.08](1.4013889,-5.3193054)(0.20138885,-5.3193054)(0.20138885,-5.3193054)
\psline[linecolor=black, linewidth=0.08](1.4013889,-6.1193056)(0.20138885,-6.1193056)(0.20138885,-6.1193056)
\psdots[linecolor=black, dotsize=0.4](1.4013889,-3.7193055)
\psdots[linecolor=black, dotsize=0.4](1.4013889,-4.519305)
\psdots[linecolor=black, dotsize=0.4](1.4013889,-5.3193054)
\psdots[linecolor=black, dotsize=0.4](1.4013889,-6.1193056)
\psline[linecolor=black, linewidth=0.08](16.601389,-1.7193054)(17.80139,-2.9193053)(17.80139,-2.9193053)
\psline[linecolor=black, linewidth=0.08](16.601389,-1.7193054)(17.80139,-2.1193054)(17.80139,-2.1193054)
\psline[linecolor=black, linewidth=0.08](16.601389,-1.7193054)(17.80139,-1.3193054)(17.80139,-1.3193054)
\psline[linecolor=black, linewidth=0.08](16.601389,-1.7193054)(17.80139,-0.5193054)(17.80139,-0.5193054)
\psline[linecolor=black, linewidth=0.08](17.80139,-0.5193054)(19.001389,-0.5193054)(19.001389,-0.5193054)
\psline[linecolor=black, linewidth=0.08](17.80139,-1.3193054)(19.001389,-1.3193054)(19.001389,-1.3193054)
\psline[linecolor=black, linewidth=0.08](17.80139,-2.1193054)(19.001389,-2.1193054)(19.001389,-2.1193054)
\psline[linecolor=black, linewidth=0.08](17.80139,-2.9193053)(19.001389,-2.9193053)(19.001389,-2.9193053)
\psdots[linecolor=black, dotsize=0.4](17.80139,-0.5193054)
\psdots[linecolor=black, dotsize=0.4](17.80139,-1.3193054)
\psdots[linecolor=black, dotsize=0.4](17.80139,-2.1193054)
\psdots[linecolor=black, dotsize=0.4](17.80139,-2.9193053)
\psdots[linecolor=black, dotstyle=o, dotsize=0.4, fillcolor=white](19.001389,-0.5193054)
\psdots[linecolor=black, dotstyle=o, dotsize=0.4, fillcolor=white](19.001389,-1.3193054)
\psdots[linecolor=black, dotstyle=o, dotsize=0.4, fillcolor=white](19.001389,-2.1193054)
\psdots[linecolor=black, dotstyle=o, dotsize=0.4, fillcolor=white](19.001389,-2.9193053)
\psline[linecolor=black, linewidth=0.08](16.601389,-4.9193053)(17.80139,-6.1193056)(17.80139,-6.1193056)
\psline[linecolor=black, linewidth=0.08](16.601389,-4.9193053)(17.80139,-5.3193054)(17.80139,-5.3193054)
\psline[linecolor=black, linewidth=0.08](16.601389,-4.9193053)(17.80139,-4.519305)(17.80139,-4.519305)
\psline[linecolor=black, linewidth=0.08](16.601389,-4.9193053)(17.80139,-3.7193055)(17.80139,-3.7193055)
\psline[linecolor=black, linewidth=0.08](17.80139,-3.7193055)(19.001389,-3.7193055)(19.001389,-3.7193055)
\psline[linecolor=black, linewidth=0.08](17.80139,-4.519305)(19.001389,-4.519305)(19.001389,-4.519305)
\psline[linecolor=black, linewidth=0.08](17.80139,-5.3193054)(19.001389,-5.3193054)(19.001389,-5.3193054)
\psline[linecolor=black, linewidth=0.08](17.80139,-6.1193056)(19.001389,-6.1193056)(19.001389,-6.1193056)
\psdots[linecolor=black, dotsize=0.4](17.80139,-3.7193055)
\psdots[linecolor=black, dotsize=0.4](17.80139,-4.519305)
\psdots[linecolor=black, dotsize=0.4](17.80139,-5.3193054)
\psdots[linecolor=black, dotsize=0.4](17.80139,-6.1193056)
\psdots[linecolor=black, dotstyle=o, dotsize=0.4, fillcolor=white](19.001389,-3.7193055)
\psdots[linecolor=black, dotstyle=o, dotsize=0.4, fillcolor=white](19.001389,-4.519305)
\psdots[linecolor=black, dotstyle=o, dotsize=0.4, fillcolor=white](19.001389,-5.3193054)
\psdots[linecolor=black, dotstyle=o, dotsize=0.4, fillcolor=white](19.001389,-6.1193056)
\psline[linecolor=black, linewidth=0.08](13.401389,-1.7193054)(12.201389,-0.5193054)(12.201389,-0.5193054)
\psline[linecolor=black, linewidth=0.08](12.201389,-1.3193054)(13.401389,-1.7193054)(13.401389,-1.7193054)
\psline[linecolor=black, linewidth=0.08](12.201389,-2.1193054)(13.401389,-1.7193054)(13.401389,-1.7193054)
\psline[linecolor=black, linewidth=0.08](12.201389,-2.9193053)(13.401389,-1.7193054)(13.401389,-1.7193054)
\psline[linecolor=black, linewidth=0.08](12.201389,-0.5193054)(11.001389,-0.5193054)(11.001389,-0.5193054)
\psline[linecolor=black, linewidth=0.08](12.201389,-1.3193054)(11.001389,-1.3193054)(11.001389,-1.3193054)
\psline[linecolor=black, linewidth=0.08](12.201389,-2.1193054)(11.001389,-2.1193054)(11.001389,-2.1193054)
\psline[linecolor=black, linewidth=0.08](12.201389,-2.9193053)(11.001389,-2.9193053)(11.001389,-2.9193053)
\psdots[linecolor=black, dotsize=0.4](12.201389,-0.5193054)
\psdots[linecolor=black, dotsize=0.4](12.201389,-1.3193054)
\psdots[linecolor=black, dotsize=0.4](12.201389,-2.1193054)
\psdots[linecolor=black, dotsize=0.4](12.201389,-2.9193053)
\psline[linecolor=black, linewidth=0.08](13.401389,-4.9193053)(12.201389,-3.7193055)(12.201389,-3.7193055)
\psline[linecolor=black, linewidth=0.08](12.201389,-4.519305)(13.401389,-4.9193053)(13.401389,-4.9193053)
\psline[linecolor=black, linewidth=0.08](12.201389,-5.3193054)(13.401389,-4.9193053)(13.401389,-4.9193053)
\psline[linecolor=black, linewidth=0.08](12.201389,-6.1193056)(13.401389,-4.9193053)(13.401389,-4.9193053)
\psline[linecolor=black, linewidth=0.08](12.201389,-3.7193055)(11.001389,-3.7193055)(11.001389,-3.7193055)
\psline[linecolor=black, linewidth=0.08](12.201389,-4.519305)(11.001389,-4.519305)(11.001389,-4.519305)
\psline[linecolor=black, linewidth=0.08](12.201389,-5.3193054)(11.001389,-5.3193054)(11.001389,-5.3193054)
\psline[linecolor=black, linewidth=0.08](12.201389,-6.1193056)(11.001389,-6.1193056)(11.001389,-6.1193056)
\psdots[linecolor=black, dotsize=0.4](12.201389,-3.7193055)
\psdots[linecolor=black, dotsize=0.4](12.201389,-4.519305)
\psdots[linecolor=black, dotsize=0.4](12.201389,-5.3193054)
\psdots[linecolor=black, dotsize=0.4](12.201389,-6.1193056)
\rput[bl](14.741389,-6.4793053){\Large{$G/v$}}
\psline[linecolor=black, linewidth=0.08](13.401389,-1.7193054)(16.601389,-1.7193054)(16.601389,-4.9193053)(13.401389,-4.9193053)(13.401389,-1.7193054)(16.601389,-4.9193053)(16.601389,-4.9193053)
\psline[linecolor=black, linewidth=0.08](16.601389,-1.7193054)(13.401389,-4.9193053)(13.401389,-4.9193053)
\psdots[linecolor=black, dotsize=0.4](16.601389,-4.9193053)
\psdots[linecolor=black, dotsize=0.4](5.8013887,-4.9193053)
\psdots[linecolor=black, dotsize=0.4](5.8013887,-1.7193054)
\psdots[linecolor=black, dotsize=0.4](2.601389,-1.7193054)
\psdots[linecolor=black, dotsize=0.4](2.601389,-4.9193053)
\psdots[linecolor=black, dotstyle=o, dotsize=0.4, fillcolor=white](16.601389,-1.7193054)
\psdots[linecolor=black, dotstyle=o, dotsize=0.4, fillcolor=white](13.401389,-1.7193054)
\psdots[linecolor=black, dotstyle=o, dotsize=0.4, fillcolor=white](13.401389,-4.9193053)
\psdots[linecolor=black, dotstyle=o, dotsize=0.4, fillcolor=white](11.001389,-0.5193054)
\psdots[linecolor=black, dotstyle=o, dotsize=0.4, fillcolor=white](11.001389,-1.3193054)
\psdots[linecolor=black, dotstyle=o, dotsize=0.4, fillcolor=white](11.001389,-2.1193054)
\psdots[linecolor=black, dotstyle=o, dotsize=0.4, fillcolor=white](11.001389,-2.9193053)
\psdots[linecolor=black, dotstyle=o, dotsize=0.4, fillcolor=white](11.001389,-3.7193055)
\psdots[linecolor=black, dotstyle=o, dotsize=0.4, fillcolor=white](11.001389,-4.519305)
\psdots[linecolor=black, dotstyle=o, dotsize=0.4, fillcolor=white](11.001389,-5.3193054)
\psdots[linecolor=black, dotstyle=o, dotsize=0.4, fillcolor=white](11.001389,-6.1193056)
\psdots[linecolor=black, dotstyle=o, dotsize=0.4, fillcolor=white](0.20138885,-0.5193054)
\psdots[linecolor=black, dotstyle=o, dotsize=0.4, fillcolor=white](0.20138885,-1.3193054)
\psdots[linecolor=black, dotstyle=o, dotsize=0.4, fillcolor=white](0.20138885,-2.1193054)
\psdots[linecolor=black, dotstyle=o, dotsize=0.4, fillcolor=white](0.20138885,-2.9193053)
\psdots[linecolor=black, dotstyle=o, dotsize=0.4, fillcolor=white](0.20138885,-3.7193055)
\psdots[linecolor=black, dotstyle=o, dotsize=0.4, fillcolor=white](0.20138885,-4.519305)
\psdots[linecolor=black, dotstyle=o, dotsize=0.4, fillcolor=white](0.20138885,-5.3193054)
\psdots[linecolor=black, dotstyle=o, dotsize=0.4, fillcolor=white](0.20138885,-6.1193056)
\end{pspicture}
}
\end{center}
\caption{\small{Graphs $G$ and $G/v$ which show the tightness of the lower bound of Theorem \ref{thm:G/v-nonpendant}}}\label{fig:G/v-lower}
\end{figure}


	\begin{theorem}\label{thm:G/v-pendant}
 Let $G=(V,E)$ be a graph and $v\in V$ is a pendant vertex such that $uv\in E$.  Then,
 $$\gst (G)-1\leq \gst(G/v)\leq \gst (G)+\deg(u)-1.$$
  Furthermore, these bounds are tight.
	\end{theorem}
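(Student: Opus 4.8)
The plan is to start from the structural observation that drives everything: since $v$ is a pendant vertex, its open neighbourhood is the single vertex $u$, and placing a clique on a one-vertex set adds no edges. Hence $G/v$ is literally $G-v$, and the only quantitative change caused by the deletion is that $\deg(u)$ drops by exactly one while every other degree is preserved (and $v$ disappears). I would isolate this fact first, because it is the engine behind both inequalities and the source of the only real difficulty.

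For the lower bound $\gst(G/v)\ge \gst(G)-1$, equivalently $\gst(G)\le \gst(G-v)+1$, I would take a $\gst$-set $S$ of $G-v$ and show that $S\cup\{u\}$ strongly dominates $G$. The new vertex $v$ is adjacent to $u$ with $\deg_G(v)=1\le \deg_G(u)$, so it is strongly dominated by $u$; every other vertex $x$ keeps its old dominator $y\in S$, because passing from $G-v$ to $G$ only raises $\deg(u)$ and fixes all other degrees, so the witnessing inequality $\deg(x)\le \deg(y)$ survives and $xy$ is still an edge. This produces a strong dominating set of $G$ of size at most $\gst(G-v)+1$. (This is exactly case (ii) of Theorem~\ref{thm:G-v} specialised to $\deg(v)=1$, so one may simply quote that bound.)

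The upper bound is where the work lies, and it is precisely the reason the bound carries a $\deg(u)-1$ term rather than the naive $\gst(G)$: deleting $v$ lowers $\deg(u)$ by one, so a vertex that was strongly dominated in $G$ only through $u$, with degree equal to $\deg_G(u)$, may lose its dominator in $G-v$. To neutralise this I would take a $\gst$-set $D$ of $G$ and pass to
\[
D'=(D\setminus\{v\})\cup\bigl(N_G(u)\setminus\{v\}\bigr),
\]
of size at most $\gst(G)+\deg(u)-1$. Now every neighbour of $u$ in $G-v$ lies in $D'$, so no vertex needs $u$ as its dominator. Concretely, a vertex $x\in V(G-v)\setminus D'$ is neither in $D$ nor adjacent to $u$; if $x\neq u$ its degree is unchanged and its old dominator $y\in D$ satisfies $y\neq u,v$, hence $\deg_{G-v}(y)=\deg_G(y)$ and domination persists, while $u$ itself (if $u\notin D'$, i.e. $u\notin D$) is strongly dominated by any $G$-witness $y$ of its domination, which has $\deg_{G-v}(y)=\deg_G(y)\ge \deg_G(u)>\deg_{G-v}(u)$. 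The single degenerate case $\deg(u)=1$ (the component of $v$ is the edge $uv$) makes $N_G(u)\setminus\{v\}$ empty and must be checked by hand: there $G-v$ only isolates $u$, and one sees directly that $\gst(G-v)=\gst(G)=\gst(G)+\deg(u)-1$.

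Finally, for tightness I would give two small graphs. The lower bound is matched by examples where deleting the pendant leaves the optimum essentially unchanged up to one vertex. For the upper bound the construction must force all $\deg(u)-1$ additional vertices: attach to $u$ the pendant $v$ together with $\deg(u)-1$ further neighbours, each of degree exactly $\deg_G(u)$ and strongly dominated in $G$ \emph{solely} through $u$ (each of their other neighbours having strictly smaller degree and being dominated elsewhere), so that once $v$ is removed and $\deg(u)$ drops, every such neighbour is forced into each strong dominating set of $G-v$. The main obstacle throughout is bookkeeping the degree change at $u$ correctly; once that is pinned down, both inequalities and the tightness constructions follow.
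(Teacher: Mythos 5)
Your argument is correct and follows essentially the same route as the paper: the lower bound (add $u$ to a $\gst$-set of $G/v$) is identical, and your upper bound adds the same $\deg(u)-1$ vertices $N_G(u)\setminus\{v\}$ that the paper adds in its cases (ii) and (iii), just packaged as the single uniform set $D'$ instead of a three-way case analysis; your explicit identification $G/v=G-v$ for pendant $v$, and your separate treatment of the degenerate case $\deg(u)=1$ (where $N_G(u)\setminus\{v\}$ is empty and $u$ becomes isolated), are points the paper leaves implicit, so this part is if anything slightly more careful than the original. The only real omission is a witness for the tightness of the lower bound: you describe what such an example must do but never exhibit one, whereas the paper uses the path $P_{3k+1}$, for which deleting the pendant vertex yields $P_{3k}$ and drops the strong domination number by exactly one.
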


	\begin{proof}
First we consider the upper bound. Suppose that $D$ is a $\gst$-set of $G$. So  clearly $u\in D$. We have the following cases:
\begin{itemize}
\item[(i)]
$u$ do not strong dominate  other vertices except $v$. Then $D$ is strong dominating set of $G/v$ and we have $\gst(G/v)\leq \gst (G)$.

\item[(ii)]
$u$  strong dominate  $w\neq v$ and $\deg_G(u)\geq\deg(w)+1$. Then $\left( D\cup N(u)\right)\setminus \{w\} $ is strong dominating set of $G/v$ and we have $\gst(G/v)\leq \gst (G)+\deg(u)-1$, because all vertices are strong dominating by the same vertices as before.

\item[(iii)]
$u$  strong dominate  $w\neq v$ and $\deg_G(u)= \deg(w)$. Then $\left( D\cup N(u)\right)\setminus \{u\} $ is strong dominating set of $G/v$ and we have $\gst(G/v)\leq \gst (G)+\deg(u)-1$, because  all vertices are strong dominated  by the same vertices as before, and $u$ is strong dominated  by $w$.

\end{itemize}
Hence $\gst(G/v)\leq \gst (G)+\deg(u)-1$. Now we show that this bound is tight. Consider graph $G$ in Figure \ref{fig:G/v-upper-pendant}. One can easily check that the set of black vertices is a $\gst$-set of $G$, say $S$, and $\left(S\cup N(u)\right)\setminus \{u\}$ is a $\gst$-set of $G/v$. Now, we consider the lower bound. First, we form $G/v$ and find a $\gst$-set of $G/v$, say $D$. Then one can easily check that $D\cup\{u\}$ is a strong dominating set of $G$, and we have $\gst (G)\leq \gst(G/v)+1$. If we consider $G$ as path graph of order $3k+1$, where $k\in\mathbb{N}$, then we see that  $\gst (G/v)= \gst(G)-1$, and the tightness holds.
\qed
	\end{proof}

\begin{figure}
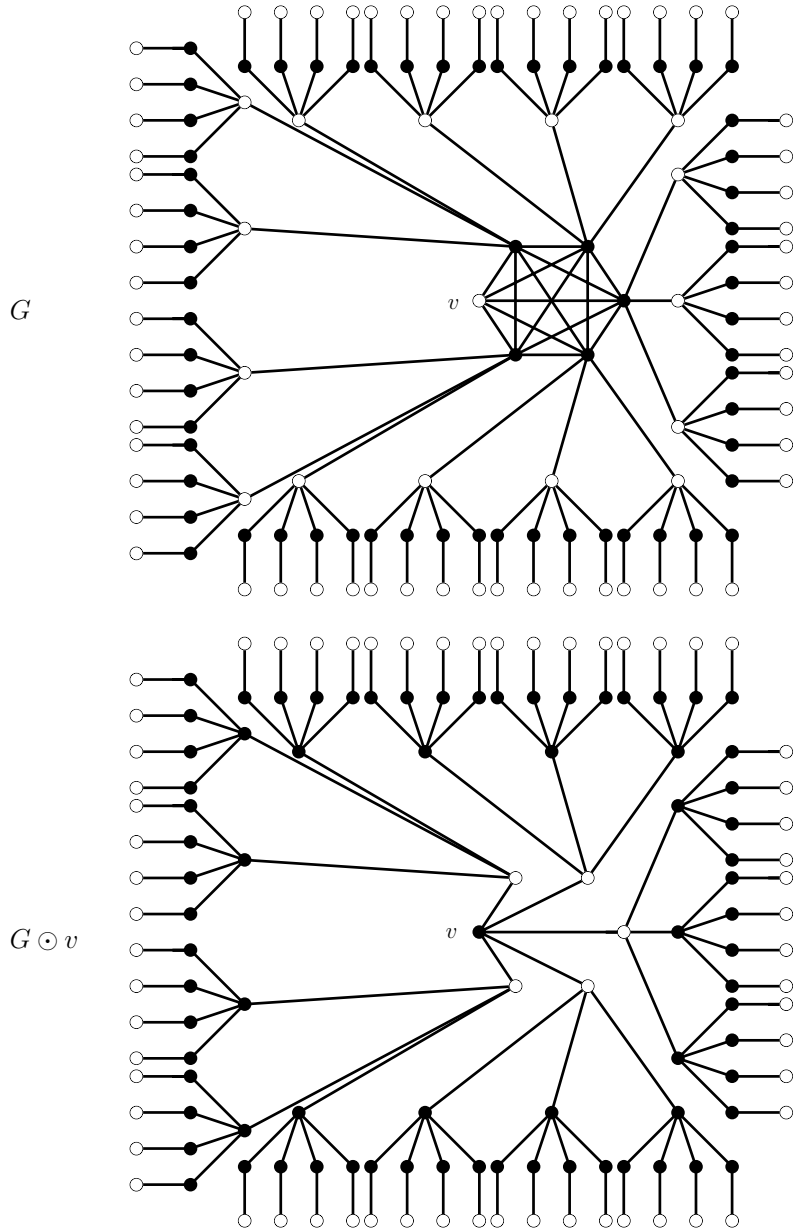

\begin{center}
\psscalebox{0.6 0.6}
{

}
\end{center}
\caption{\small{Graphs $G$ and $G\odot v$ which show the tightness of the upper bound of Theorem \ref{thm:Godotv-upper}}}\label{fig:Godotv-upper}
\end{figure}


We have the following result as an immediate outcome of Theorem \ref{thm:G/v-pendant}.	

	\begin{corollary}
 If  $G=(V,E)$ is a graph and $e=uv\in V$ such that $v$ is the pendant vertex, then,
 $$\gst (G)-1\leq \gst(G/e)\leq \gst (G)+\deg(u)-1.$$	
	\end{corollary}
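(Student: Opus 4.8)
The plan is to reduce the corollary entirely to Theorem~\ref{thm:G/v-pendant} by showing that, for a pendant vertex $v$, the edge contraction $G/e$ and the vertex contraction $G/v$ produce the same graph. First I would record the defining feature of a pendant vertex: since $\deg(v)=1$ and $uv\in E$, the open neighbourhood satisfies $N(v)=\{u\}$.

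Next I would compare the two operations on this graph. The vertex contraction $G/v$ deletes $v$ and places a clique on $N(v)$; but $N(v)=\{u\}$ consists of a single vertex, so the added clique is empty and hence $G/v=G-v$. For the edge contraction $G/e$, the vertices $u$ and $v$ are merged into a single vertex $w$ whose neighbourhood is $\bigl(N(u)\cup N(v)\bigr)\setminus\{u,v\}=N(u)\setminus\{v\}$, because $N(v)=\{u\}$. This is precisely the neighbourhood of $u$ inside $G-v$, and since no multiple edges or loops are created (the only edge incident to $v$ is $uv$ itself), the identification $w\leftrightarrow u$ yields a graph isomorphism $G/e\cong G-v$.

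Combining the two computations gives $G/e\cong G/v$, so in particular $\gst(G/e)=\gst(G/v)$. Applying Theorem~\ref{thm:G/v-pendant}, whose hypotheses ($v$ pendant, $uv\in E$) match the present situation verbatim, then yields
$$\gst(G)-1\le \gst(G/e)\le \gst(G)+\deg(u)-1,$$
as required. I do not expect any genuine obstacle here: the whole content is the identification $G/e\cong G/v$ for pendant $v$, after which the bounds are inherited directly. The single point to state carefully is that the clique inserted in the vertex-contraction definition is trivial exactly because $v$ has only one neighbour, which is what forces the two modified graphs to coincide.
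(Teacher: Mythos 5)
Your identification $G/e\cong G-v\cong G/v$ for a pendant vertex $v$ is exactly the (implicit) content of the paper, which presents this corollary as an immediate consequence of Theorem~\ref{thm:G/v-pendant} without further argument. Your proposal is correct and simply makes that reduction explicit, so it matches the paper's approach.
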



Here we consider a modified graph which is obtained by another operation on a vertex. 	We denote by $G\odot v$ the graph obtained from $G$ by the removal of 
all edges between any pair of neighbors of $v$, note $v$ is not removed from 
the graph. This operation  removes triangles from the graph and for the first time has considered for computation of domination polynomial of a graph, which is the generating function for the number of dominating sets of graphs (\cite{Emeric}).

\begin{theorem}\label{thm:Godotv-upper}
	Let $G=(V,E)$ be a graph and $v\in V$.  Then,
	$$\gst(G\odot v)\leq \gst(G)+1-2\deg_G(v)+\sum_{u\sim v}\deg_{G\odot v}(u).$$
	Furthermore, this bound is  tight.
\end{theorem}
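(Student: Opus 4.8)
The plan is to start from a $\gst$-set $D$ of $G$ and surgically repair it into a strong dominating set of $G\odot v$, paying only for the vertices near $v$ whose domination can break. First I would record precisely what the operation changes: passing from $G$ to $G\odot v$ deletes only edges with both endpoints in $N(v)$, so every vertex outside $N(v)$ keeps its neighborhood and its degree, the vertex $v$ keeps its degree, and for each $u\sim v$ one has $\deg_{G\odot v}(u)=\deg_G(u)-|N(u)\cap N(v)|$, while $v\in N_{G\odot v}(u)$ for every such $u$. Consequently the only strong-domination relations of $D$ that can fail in $G\odot v$ are those whose dominating endpoint lies in $N(v)$ (its degree dropped) or whose dominated endpoint is another neighbor of $v$ (that edge was deleted); everything else is inherited verbatim.

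Then I would test the candidate set
\[
S=\bigl(D\setminus N(v)\bigr)\cup\bigcup_{u\sim v}N_{G\odot v}(u),
\]
observing that $v\in S$ automatically, since $v$ belongs to every $N_{G\odot v}(u)$. The verification that $S$ strong-dominates $G\odot v$ splits into three cases. The vertex $v$ lies in $S$. A vertex $w\notin N[v]$ has the same degree and the same neighbors as in $G$: if its old dominator lay outside $N(v)$ it is retained in $S$ and still works, while if its old dominator was some $u\sim v$ then $w$ is an external neighbor of $u$, hence $w\in N_{G\odot v}(u)\subseteq S$ and is dominated trivially.

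The delicate case, and the step I expect to be the main obstacle, is each neighbor $u$ of $v$ itself: after deleting $N(v)$ from $D$ the vertex $u$ need not lie in $S$, so one must exhibit a neighbor of $u$ inside $S$ whose $G\odot v$-degree is at least $\deg_{G\odot v}(u)$. Its surviving neighbors in $G\odot v$ are exactly $v$ together with its external neighbors, all of which lie in $S$; the argument therefore has to guarantee that one of these retains sufficiently large degree (for example through $\deg_{G\odot v}(v)=\deg_G(v)$, or through a maximum-degree external neighbor). This is precisely where the structure of a $\gst$-set, rather than an arbitrary set, must be exploited, and it is also the point that governs whether the neighbors of $v$ may legitimately be dropped from $D$.

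Finally I would count. Writing $A_u=N_{G\odot v}(u)$ for $u\sim v$ and using $v\in A_u$ for every such $u$, the common vertex $v$ is counted once rather than $\deg_G(v)$ times, so
\[
\Bigl|\bigcup_{u\sim v}A_u\Bigr|=1+\Bigl|\bigcup_{u\sim v}(A_u\setminus\{v\})\Bigr|\le\sum_{u\sim v}\deg_{G\odot v}(u)-\deg_G(v)+1.
\]
Combined with $|S|\le|D|-|D\cap N(v)|+\bigl|\bigcup_{u\sim v}A_u\bigr|$, this produces the stated estimate as soon as the $\deg_G(v)$ deleted neighbors are fully charged, that is, once the $-|D\cap N(v)|$ term contributes a second $\deg_G(v)$; securing this saving simultaneously with the domination of each $u$ above is the coupled heart of the argument, and I expect it to be the genuinely hard part, the factor $2\deg_G(v)$ coming half from the shared vertex $v$ in the union and half from the removal of $N(v)$ from $D$. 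The tightness of the bound is then read off from the pair $G$ and $G\odot v$ displayed in Figure \ref{fig:Godotv-upper}, whose black vertices give $\gst$-sets meeting the estimate with equality.
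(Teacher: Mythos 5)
Your candidate set $S$ is, up to writing $\{v\}$ separately, exactly the set $D'$ the paper uses, and your bookkeeping of which strong-domination relations survive the passage to $G\odot v$ is more careful than the paper's. However, the two steps you flag as ``the coupled heart of the argument'' are genuine gaps, not merely hard steps, and neither can be closed. On the counting side, your estimate $|S|\le |D|-|D\cap N(v)|+1+\sum_{u\sim v}\bigl(\deg_{G\odot v}(u)-1\bigr)$ only yields the stated bound if $|D\cap N(v)|\ge \deg_G(v)$, i.e.\ if $N(v)\subseteq D$, which there is no reason to expect; the paper's proof silently makes the same substitution when it passes from $D'$ to the displayed inequality. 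On the domination side, each $u\sim v$ is expelled from $S$, its only potential dominators in $S$ are $v$ and its external neighbours, and nothing about $D$ being a $\gst$-set of $G$ forces any of these to have $G\odot v$-degree at least $\deg_{G\odot v}(u)$; the paper offers no argument here either.

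In fact the inequality itself fails, which is why you could not see how to finish: take $G=P_3$ with $v$ the centre and leaves $u_1,u_2$. Then $G\odot v=G$, so $\gst(G\odot v)=\gst(G)=1$, while the right-hand side is $1+1-2\cdot 2+(1+1)=0$. So the statement needs an additional hypothesis (some lower bound on the degrees of the neighbours of $v$, or on the number of edges actually removed), and the paper's own proof is no more complete than your sketch at precisely the two places you identified. Your instinct to isolate those two steps as the obstruction was correct.
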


	\begin{proof}
If $v$ is a pendant vertex, then we have nothing to prove, because $\gst(G\odot v)= \gst(G)$. So in the following, suppose that $v$ is not a pendant vertex, and $D$ is a $\gst$-set of $G$. 
If $x\in N(v)\cap D$, and does not strong dominate other vertices, then we simply keep it for strong dominating set of  $G\odot v$, and add $v$ to $D$. If $x\in N(v)\cap \left(V\setminus D\right)$, then we add $x$ and add $v$ to $D$. So, in every cases,  all  $x\in N(v)$ are strong dominate  some other vertices, and these vertices are not in $N(v)$. Suppose that $x$ is strong dominate  $y$ and $y\notin N(v)$. If after forming $G\odot v$, $\deg_{G\odot v}(x)\geq \deg(y)$, then we just add $v$ to $D$. But sometimes, we need to add all neighbours of $x$ to $D$ and remove $x$ from $D$. So in this  case, 
The set 
$$D'=\left(D \setminus N(v) \right)\cup \{v\} \bigcup_{u\sim  v}\left(N_{G\odot v}(u)\setminus \{v\}\right),$$ 
is a strong dominating set of $G\odot v$ with the biggest size other than what ever we mentioned before, and we have 
$$\gst(G\odot v)\leq \gst(G)+1-\deg_G(v)+\sum_{u\sim v}\left(\deg_{G\odot v}(u)-1\right),$$
and we are done. Now, we show that this bound is tight. Consider graph $G$ and $G\odot v$ in Figure \ref{fig:Godotv-upper}. One can easily check that the set of black vertices is a $\gst$-set of both graphs, and therefore the equality holds.
\qed
	\end{proof}


\end{document}